\documentclass[10pt]{amsart}
\usepackage{amsmath, amsthm, amsfonts, amssymb}
\usepackage{enumerate}
\usepackage[bookmarks=false]{hyperref}

\usepackage{xcolor}

\usepackage{verbatim}

\usepackage{comment}

\newtheorem{thm}{Theorem}[section]

\newtheorem{col}[thm]{Corollary}
\newtheorem{lem}[thm]{Lemma}

\theoremstyle{definition}
\newtheorem{defn}[thm]{Definition}
\newtheorem{defn/lem}[thm]{Definition/Lemma}
\newtheorem{rmk}[thm]{Remark}

\newcommand{\C}{\ensuremath{\mathbb{C}}}

\newcommand{\M}{\ensuremath{\mathbb{M}}}
\newcommand{\N}{\ensuremath{\mathbb{N}}}
\newcommand{\Z}{\ensuremath{\mathbb{Z}}}

\newcommand{\cU}{\ensuremath{\mathcal{U}}}
\newcommand{\cT}{\ensuremath{\mathcal{T}}}

\begin{document}

\title[A new source of purely finite matricial fields]{A new source of purely finite matricial fields}

\author[D. Gao]{David Gao}
\address{Department of Mathematical Sciences, UCSD, 9500 Gilman Dr, La Jolla, CA 92092, USA}\email{weg002@ucsd.edu}\urladdr{https://sites.google.com/ucsd.edu/david-gao}

\author[S. Kunnawalkam Elayavalli]{Srivatsav Kunnawalkam Elayavalli}
\address{Department of Mathematics, UMD, Kirwan Hall, Campus Drive, MD 20770, USA}\email{sriva@umd.edu}
\urladdr{https://sites.google.com/view/srivatsavke/home}

\author[A. Manzoor]{Aareyan Manzoor}
\address{Department of Mathematics, University of Waterloo, Waterloo, Ontario, Canada}\email{amanzoor@waterloo.edu}
\urladdr{https://aareyanmanzoor.github.io}

\author[G. Patchell]{Gregory Patchell}
\address{\parbox{\linewidth}{Mathematical Institute, University of Oxford, Andrew Wiles Building, \\ Radcliffe Observatory Quarter, Woodstock Road, Oxford, OX2 6GG, UK}}
\email{greg.patchell@maths.ox.ac.uk}
\urladdr{https://sites.google.com/view/gpatchel}

\begin{abstract}
    A countable group $G$ is said to be \emph{matricial field} (MF) if it admits a strongly converging sequence of approximate homomorphisms into matrices; i.e, the norms of polynomials converge to those in the left regular representation. $G$ is \emph{purely MF} (PMF) if these maps are actual homomorphisms, and $G$ is further \emph{purely finite field} (PFF) if the image of each homomorphism is finite. By developing a new operator algebraic approach to these problems, we are able to prove the following result bringing several new examples into the fold. Suppose $G$ is a MF (resp., PMF, PFF) group and $H<G$ is separable (i.e., $H=\cap_{i\in \N}H_i$ where $H_i<G$ are finite index subgroups) and $K$ is a residually finite MF (resp., PMF, PFF) group. If either $G$ or $K$ is exact, then the amalgamated free product $G*_{H}(H\times K)$ is MF (resp., PMF, PFF). Our work has several applications, we list some below:
    \begin{enumerate}
    \item The Brown--Douglas--Fillmore semigroups of many new examples of reduced group $C^*$-algebras are shown to be not groups.
    \item Arbitrary group doubles $G*_HG$ of MF (resp., PMF, PFF) over separable subgroups $H$ are MF (resp., PMF, PFF). Moreover, $G*H$ is PFF whenever $G,H$ are PFF, and either $G$ or $H$ is exact. 
    \item Arbitrary graph products of residually finite exact MF (resp., PMF, PFF) groups are MF (resp., PMF, PFF), yielding a significant generalization of the breakthrough work of M. Magee and J. Thomas.
    \item The open problem of proving PFF for fundamental groups of closed hyperbolic 3-manifolds is resolved. This has geometric significance in the theory of minimal surfaces via A. Song's approach. 
    \end{enumerate}
\end{abstract}
\maketitle

\begin{center}
    \textit{for Vidhya Ranganathan}
\end{center}
\section{Introduction}

\subsection{Matricial field}

Understanding finite dimensional approximations for operator algebras allows one to effectively import intuition from the behavior in finite dimensions to understand infinite dimensional operator algebras and vice versa. An excellent example of the forwards direction is the recent revolutionary work on the Connes embedding problem \cite{Ji_2021_MIP_RE}. On the other hand, Voiculescu's introduction of free probability theory motivated by his study of the free group factors, in particular his cornerstone asymptotic freeness result \cite{Voiculescu1991, voiculescu1992free}, offers great testimony to the power of the backwards direction. The availability of matrix models that converge weakly to tracial noncommutative distributions in von Neumann algebras has also yielded striking insights into their structure, via free entropy theory \cite{PicuSurvey, HayesPT}. The subsequent groundbreaking work of Haagerup and Thorbjørnsen \cite{HaagerupThorbjornsen2005} opened up the possibility of extending the ``weak convergence'' of matrix models into the setting of $C^*$-algebras where one would demand ``strong convergence'' in operator norms. The endeavor of establishing strong convergence has recently gained prominence in both random matrix theory and operator algebras, and has paved the way for solutions to several long standing problems in multiple  areas of mathematics and applied mathematics \cite{vanhandel2025strongconvergenceshortsurvey, magee2025strong}.

Our present article furthers the study of strong convergence. The \emph{matricial field} (MF) property of countable groups is due to Blackadar and Kirchberg \cite{BlackadarKirchberg1997}. This property asks for finite dimensional approximations; i.e., matrix models for reduced group $C^*$-algebras in the strong sense. To be precise, we say $G$ is MF if for all finite sets $F \subseteq G$ and $\epsilon > 0$, there are $d \in \N$ and a function $u: G \rightarrow {U}(\M_d(\mathbb{C}))$ with
	\begin{enumerate}
		\item\label{def:MF-group1} $\|u_{gh} - u_gu_h\| < \epsilon$ for all $g, h \in F$,
		\item\label{def:MF-group2} $|\tau(u_g)| < \epsilon$ for all $g \in F \setminus\{1\}$, and
 		\item\label{def:MF-group3} $\big|\big\| \sum\limits_{g \in F} c_g u_g \big\| - \big\| \sum\limits_{g \in F} c_g \lambda_g\big\|_{C^*_r(G)} \big| < \epsilon$ for all $(c_g)_{g \in F} \subseteq \C$ with $\max\limits_{g \in F} |c_g| \leq 1$, where $\lambda_g$ refers to the unitary associated to $g\in G$ in the left regular representation.
	\end{enumerate}

 In other words, $G$ is MF if there exists an ultrafilter $\cU$ on $\N$ and a trace-preserving $*$-homomorphism from the reduced $C^*$-algebra, $\pi: C^*_r(G)\to \prod_\cU \M_n(\C)$, where the target $C^*$-algebra is the ultraproduct of matrix algebras with respect to the operator norm. Furthermore, $G$ is said to be \emph{purely MF} (PMF) if the maps $u$ from above can be chosen as actual homomorphisms. $G$ is said to be a \emph{purely finite field} (PFF) if additionally the image of the homomorphisms are finite at each stage. $G$ is further said to be a \emph{purely permutation field} (PPF) if in fact these homomorphisms are homomorphisms to finite permutation groups composed with standard irreducible representations of permutation groups. Note that the notations PMF and PPF are  borrowed from \cite{magee2025strong}. The relationship between these notions is transparent and is given below: 
$$\text{PPF} \implies \text{PFF} \implies \text{PMF}\implies  \text{MF}.$$

Outside of intrinsic interest in random matrix theory, these properties have phenomenal applications. MF has applications to operator algebras. If a $C^*$-algebra is MF but not quasidiagonal, then
$A$ has an extension by the compact operators $\mathbb{K}$ which is not invertible in the sense of Brown, Douglas and Fillmore \cite{BrownDouglasFillmore1977} (see for instance the remark at the end of Section 2 in \cite{Seebach2012}). In particular if a group is non amenable (hence not quasidiagonal \cite{Hadwin1987}) and MF, then $Ext(C^*_r(G))$ is not a group. PMF has certain geometric applications \cite{MageeThomas2023}, including spectral gaps of certain  
Laplacians acting on vector bundles. PFF has remarkably found use in the theory of minimal surfaces through work of A. Song in his approach \cite{song2025randomharmonicmapsspheres}. PFF is also interestingly in the spirit of soficity \cite{pestov2008hyperlinear} for algebras: it is not just a
finite-dimensional approximation of the reduced group $C^*$-algebra, but
even a finite approximation. PPF is the most desirable property here not only because it encompasses all of the above, but also because it plays a fundamental role in deep applications to the study of optimal spectral gaps of graphs and hyperbolic manifolds \cite{BordenaveCollins2019, HideMagee2023}. PPF is however quite difficult to access, especially because it fails in certain simple examples. One of the main novelties of the present article is that we demonstrate that it is still possible in significant generality to bypass PPF and access PFF instead. For further details and references concerning these notions we point the reader to the beautiful surveys \cite{magee2025strong, vanhandel2025strongconvergenceshortsurvey}.

Importantly, these properties are much harder to prove than weaker representability properties such as soficity or Connes embeddability \cite{pestov2008hyperlinear}, because it involves establishing {strong convergence}. Despite the vast collection of examples of soficity and hyperlinearity, at the moment only a handful of examples of groups are known to be MF/PMF/PFF/PPF. Moreover, it is noteworthy to point out that proving such properties in each of these cases has been profoundly difficult and needed deep insights. The current list of examples includes MF for amenable groups \cite{TWW2017}; PPF for free groups \cite{BordenaveCollins2019} (see also \cite{HaagerupThorbjornsen2005, Chen_2026}); PPF for limit groups \cite{loudermagee2025limitgroups}; PMF for right angled Artin groups (RAAGs) \cite{MageeThomas2023} (see also \cite{chen2024newapproachstrongconvergence}); MF for crossed products by free groups acting on amenable groups \cite{RainoneSchafhauser2019}; MF for $G_1*_HG_2$ where $G_1,G_2$ are amenable and $H$ is a normal subgroup \cite{schafhauser2024finite}. These properties are easily seen to be closed under taking subgroups and MF/PMF/PFF are closed under taking finite index overgroups (Lemma 7.1 in \cite{loudermagee2025limitgroups}, see also Lemma \ref{lem:strong-MF-facts}). Being closed under finite index overgroups seems open in the case of PPF. It is currently known that MF/PMF is also closed under free products \cite{collins2014strong, hayes2015lp}. Moreover, MF/PMF/PFF is also closed under direct products provided one of the groups is exact \cite{HadwinShen2010}. Strikingly, PPF is necessarily not closed under direct products, see \cite{magee2025strong} for a proof that $\mathbb{F}_2\times \mathbb{F}_2 \times \mathbb{F}_2$ is not PPF (an argument identical to Magee's shows $G\times H\times K$ is not PPF when $G,H$ are nonamenable and $K$ is nonabelian). Interestingly, it is also known that $SL_4(\mathbb{Z})$ is not PMF \cite{Magee_2024}. Despite all of these results, the situation for wider natural families of groups, in particular for amalgamated free products, has remained a challenging open problem. 

\subsection{Main results} In this paper we are able to make significant conceptual progress and expand on the collection MF, PMF and notably PFF groups by developing a new $C^*$-algebraic approach to the problem. Our main result is the following: 

\begin{thm}\label{thm:1.1}
    Suppose $G$ is an MF (resp., PMF, PFF) group and $H<G$ is separable (i.e., $H=\cap_{i\in \N}H_i$ where $H_i<G$ is a decreasing sequence of finite index subgroups). Let $L$ be a residually finite MF (resp., PMF, PFF) group such that either $G$ or $L$ is exact. Then the amalgamated free product $G*_{H}(H\times L)$ is MF (resp., PMF, PFF).
\end{thm}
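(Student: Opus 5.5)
The strategy is to approximate $\Gamma := G*_{H}(H\times L)$ by a sequence of quotients $\Gamma_i$, each of which is MF (resp.\ PMF, PFF) for elementary reasons. We then show that the regular representations of the $\Gamma_i$ converge strongly to that of $\Gamma$, and conclude by a diagonal argument.

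\emph{Step 1: structure of $\Gamma$ and the quotients $\Gamma_i$.} Let $r:\Gamma\to G$ be the retraction killing $L$. A Bass--Serre / Kurosh argument identifies $\ker r$ with the free product $*_{gH\in G/H} gLg^{-1}$ of copies of $L$ indexed by $G/H$. Hence
\[
\Gamma\cong \Big(\mathop{*}_{G/H} L\Big)\rtimes G ,
\]
where $G$ acts by permuting the free factors through its action on $G/H$. Put $\Gamma_i:=G*_{H_i}(H_i\times L)\cong (*_{G/H_i}L)\rtimes G$. Since $H\le H_i$, there is a canonical surjection $\pi_i:\Gamma\to\Gamma_i$, which is induced by the projections $G/H\to G/H_i$ on the free factors.

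Because $[G:H_i]<\infty$, we may choose a finite index normal subgroup $N_i\trianglelefteq G$ with $N_i\le H_i$; it acts trivially on $G/H_i$. Then $(*_{G/H_i}L)\times N_i$ has finite index in $\Gamma_i$. Here $*_{G/H_i}L$ is a free product of finitely many copies of $L$. Free products of MF/PMF groups are MF/PMF by the cited results. For PFF I would use residual finiteness of $L$ to build the free product models directly from finite quotients of $L$ together with strongly convergent random permutation models of the free group, via the Bordenave--Collins theorem. Next, the product with $N_i$ is MF (resp.\ PMF, PFF) by the Hadwin--Shen direct product result. This is where exactness of $G$ (hence of $N_i$) or of $L$ (hence of the free product) enters. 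Finally, Lemma~\ref{lem:strong-MF-facts} passes the property to the finite index overgroup $\Gamma_i$.

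\emph{Step 2: asymptotic faithfulness.} Fix a finite set $F\subseteq\Gamma$. Write each element of $F$ in normal form as a word in copies $g_kLg_k^{-1}$ together with an element of $G$. Only finitely many cosets $g_kH$ occur. Since $\bigcap_i H_i=H$ and the $H_i$ decrease, these cosets have distinct images in $G/H_i$ for all large $i$. Consequently $\pi_i$ is injective on $F$ and on $F\cdot F$ eventually. In particular, the canonical traces satisfy $\tau_{\Gamma_i}(\pi_i(x))\to\tau_\Gamma(x)$ for every $x\in\C[\Gamma]$.

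\emph{Step 3: norm convergence (main obstacle).} It remains to prove $\|\lambda_{\Gamma_i}(\pi_i(x))\|\to\|\lambda_\Gamma(x)\|$ for $x\in\C[\Gamma]$. Given this, composing near-optimal models for $\Gamma_i$ with $\pi_i$ and taking $i\to\infty$ (passing through the ultraproduct) yields the MF/PMF/PFF models for $\Gamma$. Homomorphisms and finite images are preserved, since $\pi_i$ is a homomorphism.

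For the lower bound $\liminf_i\|\pi_i(x)\|\ge\|x\|$, I would use Step 2 and the fact that the trace determines the reduced norm: $\|x\|=\lim_n\tau((x^*x)^n)^{1/2n}$. Each moment is eventually computed faithfully in $\Gamma_i$. Combined with $\|y\|\ge\tau((y^*y)^n)^{1/2n}$, this gives the lower bound.

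The upper bound is the genuine difficulty, since $\ker\pi_i$ is nonamenable. I would write $C^*_r(\Gamma)=C^*_r(*_{G/H}L)\rtimes_r G$ and $C^*_r(\Gamma_i)=C^*_r(*_{G/H_i}L)\rtimes_r G$. For a fixed $x$, only finitely many free factors are involved, and their images stay separated in $G/H_i$. This suggests comparing the two reduced free products through the conditional expectations onto the finitely many relevant factors. The aim is a $G$-equivariant comparison map between the coefficient algebras that is isometric on the relevant finite-dimensional piece. Exactness of $G$ (or of $L$, hence of the free product) would then be used so that reduced crossed product norms are controlled by the coefficient norms, giving $\limsup_i\|\pi_i(x)\|\le\|x\|$. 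I expect this step to require the most care; if the direct comparison fails, I would instead approximate within $\Gamma$ by the subgroup generated by $G$ and the finitely many relevant conjugates of $L$.
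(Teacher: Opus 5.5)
\paragraph{There is a genuine gap: the upper bound in Step 3 is the whole theorem, and it is not proved.}
Your Steps 1--2 and the lower bound in Step 3 are sound. The decomposition $\Gamma\cong(*_{G/H}L)\rtimes G$ is correct, the surjections $\pi_i\colon\Gamma\to\Gamma_i=G*_{H_i}(H_i\times L)$ exist and are eventually injective on finite sets, and the moment argument gives $\liminf_i\|\lambda_{\Gamma_i}(\pi_i(x))\|\ge\|\lambda_\Gamma(x)\|$. But the upper bound $\limsup_i\|\lambda_{\Gamma_i}(\pi_i(x))\|\le\|\lambda_\Gamma(x)\|$ carries the entire analytic content of the theorem, and you give no mechanism for it.

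Trace convergence cannot supply it. Take quotients $\mathbb{F}_2/N_k$ by a decreasing chain of finite-index normal subgroups with trivial intersection. They converge to $\mathbb{F}_2$ in exactly the sense of your Step 2, yet $a+a^{-1}+b+b^{-1}$ has norm $4$ in every quotient and $2\sqrt{3}$ in $C^*_r(\mathbb{F}_2)$.

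The comparison you sketch fails too. The coefficient map $*_{G/H}L\to *_{G/H_i}L$ induced by $\pi_i$ folds together factors indexed by distinct cosets $gH\neq g'H$ with $gH_i=g'H_i$. Such folding is unbounded for reduced norms: already for $L=\mathbb{Z}$, collapsing two factors sends $a+a^{-1}+b+b^{-1}$ to $2(t+t^{-1})$, of norm $4$. Isometry on the finitely many factors occurring in $x$ does not rescue this. The reduced crossed-product norm of $x$ depends on all $G$-translates of its coefficients, and distinct translates do collide in $G/H_i$. Exactness of $G$ does not address this.

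Your fallback is vacuous, since $G$ together with a single conjugate $gLg^{-1}$ ($g\in G$) already generates $\Gamma$.

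Separately, in the PFF case Step 1 needs finite free products of copies of a PFF group $L$ to be PFF. The paper points out this was open, because random unitary conjugation destroys finiteness of the image, and obtains it only as a consequence of the theorem itself. Your sentence about finite quotients of $L$ and random permutations is therefore not an argument.

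\paragraph{The missing idea.}
The paper uses the universal Toeplitz--Pimsner algebra $\mathcal{T}(E)$ for $E\colon C^*_r(G)\to C^*_r(H)$. By Theorem~\ref{isomorphism thm}, the subalgebra generated by $C^*_r(G)$ and $\frac12(T+T^*)$ is the reduced amalgamated free product $C^*_r(G)*_{C^*_r(H)}(C^*_r(H)\otimes C[-1,1])$. This contains $C^*_r(G*_H(H\times\mathbb{Z}))$ via a trace-preserving inclusion $C^*_r(\mathbb{Z})\subset C[-1,1]$. Hence any isometry $V$ in a target algebra with $V^*aV=E(a)$ on an isometric copy of $C^*_r(G)$ yields a $*$-homomorphism defined on the reduced algebra, and the upper bound follows.

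The paper applies this with $V=(T_i)_{\mathcal{U}}$ in $\prod_{\mathcal{U}}\mathcal{T}(E_i)\otimes C^*_r(G)$ over the finite quotients $G/K_i$, tensoring with $C^*_r(G)$ (Fell absorption) to keep $C^*_r(G)$ isometric. With this tool, your own quotients would also work for $L=\mathbb{Z}$:
\begin{itemize}
\item Embed $C^*_r(G)$ diagonally in $\prod_{\mathcal{U}}\mathcal{T}(E_i)$, with $E_i\colon C^*_r(G)\to C^*_r(H_i)$.
\item Then $\|E_i(a)-E(a)\|\to 0$ for every $a\in C^*_r(G)$: the two agree eventually on $\mathbb{C}[G]$, and both are contractive.
\item Finally, $\Gamma_i$ contains $\mathbb{F}_{[G:H_i]}\times N_i$ with finite index.
\end{itemize}

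General $L$ is not treated directly in the paper. It reduces to $L=\mathbb{Z}$ through $G*_H(H\times L)\hookrightarrow(G\times L)*_{H\times\{e\}}(H\times\{e\}\times\mathbb{Z})$. This is where exactness enters (to make $G\times L$ MF/PMF/PFF) and where residual finiteness of $L$ enters (to make $H\times\{e\}$ separable). It also avoids free products of copies of $L$ altogether.
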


Our approach is inspired by recent progress on \emph{selflessness} which is a form of intrinsic $C^*$-free independence in ultrapowers \cite{robertselfless, amrutam2025strictcomparisonreducedgroup, ozawa2025proximalityselflessnessgroupcalgebras}, in particular the work of Ozawa (see Section 4.6 of \cite{brown2008textrm}). In order to demonstrate the power of our result, we quickly assemble a few corollaries. Firstly, we observe that there is an isomorphism $G\ast_H (H\times\mathbb{Z}) \cong \ast_H G\rtimes \mathbb{Z}$ where $*_H G$ denotes the infinite group double of $G$ over $H$, indexed by elements of $\Z$, and $\Z$ acts on it by permuting the copies of $G$ in accordance with the left multiplication action of $\Z$ on itself. We also recall that, if $G$ and $H$ are PFF and one of them is exact, then $G \times H$ is PFF (Lemma \ref{lem:strong-MF-facts}). PFF groups are also automatically residually finite. This yields:

\begin{col}\label{col:1.2}
    Let $G$ be a MF (resp., PMF, PFF) group and $H<G$ be a separable subgroup. Then the group double $G*_HG$ is MF (resp., PMF, PFF). Moreover, if $G$ and $H$ are PFF groups such that one of them is exact, then $G*H$ is PFF. 
\end{col}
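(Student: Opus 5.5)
The plan is to reduce both claims to Theorem \ref{thm:1.1} with $L=\Z$. For the double $G*_HG$, I would take $L=\Z$, which is residually finite, exact (amenable), and PFF: its finite quotients $\Z/n\Z$, acting by the regular representation on $\ell^2(\Z/n\Z)$, strongly converge to $\lambda$. Theorem \ref{thm:1.1} then shows $\Gamma:=G*_H(H\times\Z)$ is MF (resp., PMF, PFF). Next I use the isomorphism noted above, $\Gamma\cong (*_H G)\rtimes\Z$, where $*_HG$ is the amalgam of copies $G_n$, $n\in\Z$, of $G$ over the common $H$, and the generator $t$ of $\Z$ acts by the shift $G_n\to G_{n+1}$. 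To check this, map $G\mapsto G_0$ and $t\mapsto t$ in the semidirect product. The relation that $t$ commutes with $H$ holds because the shift fixes the amalgamated $H$. The inverse sends $g\in G_n$ to $t^ngt^{-n}$, which is well defined on the amalgam since $t^nht^{-n}=h$.

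Inside $*_HG$, the subgroup generated by $G_0$ and $G_1$ is $G_0*_HG_1\cong G*_HG$. This holds because a subamalgam over the same $H$ of a tree of groups (a subtree of the Bass--Serre tree) embeds, by the normal form theorem. Since MF, PMF and PFF all pass to subgroups, $G*_HG$ inherits the property from $\Gamma$. Note that no exactness hypothesis is needed on $G$, since $\Z$ is exact.

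For the free product claim, assume $G$ and $H$ are PFF and one of them is exact. Then $G\times H$ is PFF by Lemma \ref{lem:strong-MF-facts}. In $G\times H$, the subgroup $G\times\{1\}$ is separable: it is the kernel of the projection onto $H$, and $H$ is residually finite because it is PFF. Concretely, if $N_i$ is a decreasing sequence of finite index normal subgroups of $H$ with trivial intersection, then $G\times N_i$ are finite index subgroups whose intersection is $G\times\{1\}$. Applying the first part to the group $G\times H$ and the subgroup $G$ shows that $(G\times H)*_G(G\times H)$ is PFF. This double contains $\{1\}\times H$ in the first factor and $G\times\{1\}$ in the second. To see that they generate a copy of $G*H$, I would write $(G\times H)*_G(G\times H)\cong G\times(H*H)$ (the two copies of $H$ commute with $G$). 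This group contains $G\times\{1\}$ and $\{1\}\times H^{(1)}$, where $H^{(1)}$ is the first copy of $H$, but these commute, so this particular choice does not give $G*H$.

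Instead I would use the subgroup $H^{(1)}*H^{(2)}\cong H*H$ together with $G$. Better still, I would apply the argument symmetrically: first show $G*G$ and $H*H$ have the property. Simpler yet, I would run the first part on $K=G\times H$ with the trivial subgroup, which is separable since $K$ is residually finite. This gives that $K*K$ is PFF, and $G*H\le K*K$ by taking $G$ from the first factor and $H$ from the second, with freeness inherited from the free product. This last construction is the clean route. The main point to verify carefully throughout is the isomorphism $\Gamma\cong(*_HG)\rtimes\Z$ and the injectivity of sub-amalgams. Both are standard Bass--Serre and normal form facts, so the real content lies in Theorem \ref{thm:1.1}.
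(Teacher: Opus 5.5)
Your final argument is correct and is essentially the paper's. For the double, you apply Theorem \ref{thm:1.1} with $L=\Z$ and use $G*_H(H\times\Z)\cong(*_HG)\rtimes\Z$, which contains $G_0*_HG_1$; for $G*H$, you take the double over the trivial subgroup of the PFF, residually finite group $G\times H$ and pass to a subgroup. In a write-up, drop the abandoned detour through $(G\times H)*_G(G\times H)\cong G\times(H*H)$ and the unfinished suggestion about $H^{(1)}*H^{(2)}$, since neither is used.
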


We point out that strikingly the case of free products had remained open for the PFF property until our result above. It turns out that conjugation by random unitaries as in \cite{collins2014strong} destroys the finiteness of the representations which is crucial in PFF.  Note also that the exactness assumption on one of the two groups is still necessary to apply our Theorem \ref{thm:1.1} in this situation, via passing to direct products.  In the context of the separability assumptions in Theorem \ref{thm:1.1} and Corollary \ref{col:1.2}, we remark that outside of the families of LERF groups (see a list of such examples in \cite{Gao2025}), a nice instance of separable inclusions $H<G$ which can be fed into our results is quasi-convex subgroups of cubulated hyperbolic groups. Indeed, cubulated hyperbolic groups are subgroups of RAAGs \cite{Agol2013} and hence PFF (our Corollary \ref{graph pdoructs cor} below), and quasi-convex subgroups are separable \cite{HaglundWise2008}. Another natural setting for separability is in the world of graph products \cite{HsuWise1999}. The following is our next main Corollary:

\begin{col}\label{graph pdoructs cor}
    Arbitrary graph products of exact residually finite MF (resp., PMF, PFF) groups are MF (resp., PMF, PFF). 
\end{col}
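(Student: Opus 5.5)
Graph products $\Gamma\mathcal{G}$ over a finite graph $\Gamma$ with vertex groups $G_v$ are built inductively by amalgamated free products. Since MF/PMF/PFF pass to subgroups and every finitely generated subgroup of a graph product over an infinite graph lies in a graph product over a finite subgraph, we may assume $\Gamma$ is finite. (For countable groups, MF/PMF/PFF is determined by finite subsets $F$, each lying in a finitely generated subgroup.) We then induct on the number of vertices.

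For the inductive step, fix a vertex $v$. Write $\Gamma' = \Gamma\setminus\{v\}$, let $\mathrm{lk}(v)$ be the link of $v$, and let $A = \Gamma'\mathcal{G}$ and $B = \mathrm{lk}(v)\mathcal{G}$. The standard decomposition gives
$$\Gamma\mathcal{G} \cong A *_{B} (B\times G_v).$$
This has exactly the form $G*_H(H\times L)$ of Theorem \ref{thm:1.1}, with $G=A$, $H=B$ and $L=G_v$. The hypotheses are checked as follows.
\begin{enumerate}
\item $A$ is MF (resp.\ PMF, PFF) by the induction hypothesis.
\item $L=G_v$ is residually finite and MF (resp.\ PMF, PFF) by assumption, and it is exact. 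So the exactness requirement of Theorem \ref{thm:1.1} is met through $L$, and we never need $A$ to be exact. In fact $A$ is also exact, since exactness is preserved by graph products of exact groups, but this is not needed.
\item $B$ must be separable in $A$.
\end{enumerate}

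The main obstacle is the separability of $B$, the subgroup generated by the link, inside $A$. I would use the known result that full subgraph subgroups (``parabolic'' or retract subgroups) of a graph product of residually finite groups are separable, in the spirit of \cite{HsuWise1999}. The argument goes as follows:
\begin{enumerate}
\item Graph products of residually finite groups are residually finite (Green). This is applied to $A$, which is a graph product of residually finite vertex groups.
\item For any full subgraph $\Delta\subseteq\Gamma'$, there is a retraction $r\colon A\to \Delta\mathcal{G}$ that kills the vertex groups outside $\Delta$.
\item A retract $R$ of a residually finite group $A$ is separable. Indeed $R=\{a : r(a)=a\}$, and for $a\notin R$ the element $a^{-1}r(a)\neq 1$ can be detected in a finite quotient $q$ of $A$. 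Then $\{x: q(x^{-1}r(x))=1\}$ is not a subgroup in general. Instead, one uses the standard fact that retracts of residually finite groups are closed in the profinite topology: they are equalizers of the continuous maps $\mathrm{id}$ and $r$ into a Hausdorff space.
\end{enumerate}
Closed subgroups in the profinite topology are intersections of finite index subgroups. Since $A$ is countable, we can pick a decreasing sequence of finite index subgroups with intersection $B$, which is the separability required in Theorem \ref{thm:1.1}.

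The base case, a single vertex, is trivial. Applying Theorem \ref{thm:1.1} completes the induction and yields the corollary. The one point I would double-check is that the decomposition isomorphism is correct. It is: $\Gamma\mathcal{G}$ is generated by $A$ and $G_v$, the only relations involving $G_v$ are commutation with the link vertex groups, and so the amalgam over $B$ reproduces exactly the defining presentation.
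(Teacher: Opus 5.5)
Your proof is essentially the paper's: induction on the number of vertices via $\Gamma\mathcal G\cong A*_B(B\times G_v)$, then Theorem \ref{thm:1.1} with exactness and residual finiteness supplied by $L=G_v$, and separability of the link subgroup $B$ in $A$ because it is a retract of a residually finite graph product (this is the content of Lemma 3.9 of \cite{HsuWise1999}, which the paper cites). The one error is in your optional reduction to finite graphs, which the paper does not attempt: PMF and PFF are not determined by finitely generated subgroups in general (e.g.\ $\mathbb{Q}$ is not PFF although all its finitely generated subgroups are), but the reduction still holds here, because composing models of a finite full-subgraph product $\Delta\mathcal G$ with the retraction $\Gamma\mathcal G\to\Delta\mathcal G$ gives genuine homomorphisms of $\Gamma\mathcal G$, with finite image whenever the original ones have it.
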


Our result above offers a significant generalization of the breakthrough work of Magee--Thomas \cite{MageeThomas2023}. We emphasize that this \emph{combination result} yields many new examples of MF and PMF groups, and even in the case of RAAGs, our proof of PMF is new and succinct. More importantly, the reach of our results is higher and can handle the PFF property for RAAGs and therefore all groups that embed virtually in them. For fundamental groups of closed hyperbolic 3-manifolds, this has been an open problem in the field\footnote{We thank Ramon van Handel for sharing with us the statement of this open problem.}, which is now settled. This particular result of ours is of geometric significance, as we shall explain soon, because PFF (for free and surface groups) is exactly the ingredient that goes into the breakthrough work of A. Song on minimal surfaces in Euclidean unit spheres \cite{song2025randomharmonicmapsspheres}. A. Song's paper accesses PFF via the PPF result of Bordenave--Collins \cite{BordenaveCollins2019}, and the follow up paper \cite{ancona2025minimalsurfacesnegativecurvature} accesses PFF via Louder--Magee's PPF for surface groups \cite{loudermagee2025limitgroups}. However, our results open the door to large new families of groups without having to go through PPF.   We document our result below (see \cite{Agol2013, HaglundWise2008} and also \cite[Theorem 5.27]{aschenbrenner2015manifold}): 

\begin{col}\label{Song cor}
Let $N$ be a non positively curved, compact, orientable,
aspherical 3-manifold with possibly empty boundary. Then $\pi_1(N)$ is PFF. More generally, all virtually special groups are PFF.
\end{col}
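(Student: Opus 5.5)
Corollary \ref{Song cor} is a chain of geometric inputs feeding into Corollary \ref{graph pdoructs cor} and the closure properties of PFF (Lemma \ref{lem:strong-MF-facts}).

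\textbf{Step 1: right-angled Artin groups.} A RAAG $A_\Gamma$ on a finite graph $\Gamma$ is the graph product of copies of $\Z$ over $\Gamma$. The group $\Z$ has the required properties:
\begin{enumerate}
\item it is exact, being amenable;
\item it is residually finite;
\item it is PFF, via the homomorphisms $\Z \to \Z/n\Z$ composed with the regular representation of $\Z/n\Z$.
\end{enumerate}
For (3), the normalized traces of $u_k$ vanish for $0<|k|<n$. Norms of polynomials $\sum c_k u_k$ equal the supremum of $|\sum c_k \zeta^k|$ over the $n$-th roots of unity $\zeta$. These converge to the supremum over the circle, which is the norm in $C^*_r(\Z)\cong C(\mathbb{T})$. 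Hence Corollary \ref{graph pdoructs cor} gives that every RAAG is PFF.

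\textbf{Step 2: virtually special groups.} A group $\Lambda$ is virtually special if it has a finite index subgroup $\Lambda_0$ that is the fundamental group of a compact special cube complex. By Haglund--Wise \cite{HaglundWise2008}, $\Lambda_0$ embeds in a RAAG. PFF passes to subgroups, since the restricted homomorphisms still have finite image and the inclusion $C^*_r(\Lambda_0)\subseteq C^*_r(A_\Gamma)$ is isometric and trace-preserving. So $\Lambda_0$ is PFF. PFF also passes to finite index overgroups (Lemma \ref{lem:strong-MF-facts}, via induced representations, which keep finite image). Hence $\Lambda$ is PFF.

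\textbf{Step 3: the 3-manifold case.} We invoke the geometrization-era results summarized in \cite[Theorem 5.27]{aschenbrenner2015manifold}, which rest on Agol \cite{Agol2013}, Wise, Przytycki--Wise and Liu. These state that for $N$ non positively curved, compact, orientable and aspherical (boundary allowed), $\pi_1(N)$ is virtually special. Step 2 then applies.

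The main burden is external. The one point to check carefully is that ``virtually special'' as used in \cite{aschenbrenner2015manifold} means virtually compact special, or at least virtually a subgroup of a finitely generated RAAG. A countable group is fine either way, since only subgroup and finite-index closure are used. If only the weaker notion is available (possibly infinitely generated RAAGs), I would instead note that each finitely generated subgroup of a RAAG lies in a RAAG on a finite subgraph, since a RAAG is the direct limit of those on finite subgraphs. Thus it suffices to have the graph-product result for finite graphs.
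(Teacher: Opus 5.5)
Your proposal is correct and follows the paper's route: RAAGs are PFF by Corollary~\ref{graph pdoructs cor} applied to copies of $\Z$, groups that virtually embed in RAAGs inherit PFF by closure under subgroups and finite-index overgroups, and the 3-manifold case is the virtual specialness result the paper cites from \cite{HaglundWise2008, aschenbrenner2015manifold}. Your reduction of finitely generated subgroups of possibly infinitely generated RAAGs to RAAGs on finite subgraphs is a correct detail the paper leaves implicit, and it covers the 3-manifold case; but the remark that ``a countable group is fine either way'' needs one more line for non-finitely-generated groups, namely composing with the retraction of $A_\Gamma$ onto the RAAG on a finite induced subgraph, because PFF is not inherited from finitely generated subgroups in general (e.g.\ $\mathbb{Q}$).
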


We are grateful to A. Song for sharing his helpful insights regarding the potential of the above result for geometry, which we briefly explain for the benefit of the reader before concluding this section. Given a closed manifold $M$ with fundamental group $G$, and given an $n$-dimensional unitary representation $\rho_n$ of $G$ such that $\rho_n(G)$ is finite, there is a natural way to construct a corresponding closed minimal submanifold $\Sigma_n$ inside some unit sphere. Finiteness, granted by the PFF property, is crucial here since the objects of interest are \emph{closed} submanifolds. When such a construction is fully justified, it turns out that PFF implies convergence of the shape of $\Sigma_n$ to a limit shape which can be deciphered, thanks to a good understanding of the regular representation. In the case of dimension $2$, closed 2-dimensional minimal surfaces that are almost hyperbolic have been constructed very recently in this manner \cite{song2025randomharmonicmapsspheres, ancona2025minimalsurfacesnegativecurvature}, answering old problems of S. T. Yau. Our Corollary \ref{Song cor} suggests strongly that this phenomena should carry over in dimension $3$ for instance, and more generally whenever $G$ virtually embeds in a RAAG. It is plausible that there are many closed minimal submanifolds $\Sigma^d$ of dimension $d \geq 3$ inside Euclidean spheres, whose limit geometry is that of a hyperbolic $d$-space. This would be satisfying from a geometric viewpoint as it would give the existence, in any dimension, of large families of closed minimal submanifolds in spheres whose limit shape is completely understood and canonical.

\subsection{Comments to the reader}

\subsection*{Optimality}\label{optimality} We would like to first discuss optimality aspects of our work. First note that exactness is a rather essential ingredient in certain cases, for instance Corollary \ref{graph pdoructs cor}. Indeed even in the tensor product case it is not known if MF is preserved in full generality \cite{HadwinShen2010}. A natural question arises of whether our main result and its corollaries can extend to accessing PPF. We point out that this is in fact not possible. For instance, our Corollary \ref{graph pdoructs cor} simply cannot generalize to PPF as stated because as mentioned earlier, Magee has shown that the group $\mathbb{F}_2\times \mathbb{F}_2\times \mathbb{F}_2$ is not PPF \cite{magee2025strong}, and this is indeed a RAAG. In fact, Magee's argument shows that $G\times H\times K$ is not PPF whenever $G,H$ are nonamenable and $K$ is nonabelian. The real challenge with PPF is that it is not amenable to being stable under direct products (or finite index overgroups), due to the following fundamental issue: tensoring does not preserve the image being in the orthogonal complement of the invariant vectors for permutation matrices. 

Another remark we would like to make is that it is at the moment an intractable open problem whether soficity or Connes embeddability is closed under amalgamated free products. In fact, considering the examples in Theorem \ref{thm:1.1}, it is unclear whether the separability assumption can be dropped, simply because even Connes embeddability is not known in that generality. 

\subsection*{Concreteness of matrix models}
At first glance, it may appear that our work reveals strongly convergent matrix models in a more abstract or existential sense. This is far from true. In fact, as we point out in the statement of Lemma \ref{lem:2.3}, we arrive at a very concrete and natural sequence of group homomorphisms
    \[G \ast_H (H \times \Z) \to [G/K_i\ast_{H_i/K_i} (H_i/K_i\times\Z)] \times G\] which sends $g\in G$ to $(gK_i, g)$ and $n\in \Z$ to $(n,e)$,
and argue that in fact these maps in the ultraproduct will extend to the reduced $C^*$-algebra. Note that the groups on the right hand side are products of virtually free groups (for a very concrete reason, see Lemma \ref{lem:2.4}) with $G$. Virtually free groups have a concrete strongly converging PFF model via \cite{BordenaveCollins2019} and an induced representation argument (see Lemma \ref{lem:strong-MF-facts}), and additionally $G$ is assumed to have a concrete PFF model. Combining these, we do in fact get concrete strongly converging models in our results, and in specific examples they can be tracked down explicitly.

\subsection*{A comment on bootstrapping}\label{last rmk} We include a minor remark that our work also provides an alternative proof of PFF for limit groups also (see Remark \ref{limit groups MF}). It is important to clarify however that the work \cite{loudermagee2025limitgroups} proves PPF for limit groups and this cannot be addressed currently by our method. The proof of this in our work combines our Theorem \ref{thm:1.1} and the fact that limit groups arise as iterated extensions of centralizers (Theorem 4.2 of \cite{ChampetierGuirardel2005}), and Hall's theorem for limit groups \cite{Wilton2008Halls}. We also point out for the benefit of the reader that the proof of Corollary \ref{graph pdoructs cor} follows from the fact that every graph product is constructed in an iterated fashion using the amalgamated free products in Theorem \ref{thm:1.1}, and additionally the amalgams are separable because they are retracts (Lemma 3.9 of \cite{HsuWise1999}). We also remark that our approach might also naturally be applied to get more examples of MF/PMF/PFF groups among the family of graph wreath products \cite{gao2024soficactionsgraphs}.

\subsection*{Insights into Theorem \ref{thm:1.1}} 
A key accomplishment of our work is to isolate a precise and powerful connection between some of the deep tools going into the emerging program on selflessness in $C^*$-algebras, and strong convergence of unitary representations. In effect, our approach is able to entirely reduce the problem of constructing strongly converging matrix models of groups of the form $G*_{H}(H\times \mathbb{Z})$, to just the case of free groups $\mathbb{F}_n$ (\cite{HaagerupThorbjornsen2005, BordenaveCollins2019}) and elementary constructions such as induced representations and tensor models via exactness. On one hand, handling such amalgamated free product groups allows us to cover large families at the same time thanks to beautiful results in geometric group theory around separability, RAAGs and virtual specialness \cite{Agol2013, HaglundWise2008, Wilton2008Halls, HsuWise1999}. On the other hand, this construction is precisely what allows us access to the deep $C^*$-correspondence theory machinery (see Theorem \ref{isomorphism thm}), in order to upgrade our maps from just the groups to the reduced $C^*$-algebras. 

Firstly, our maps from Lemma \ref{lem:2.3} are built into an ultraproduct of amalgamated free products (by passing to normal cores from the separability assumptions) which are each by design virtually free, and therefore admit various strongly converging models. In order for our maps to preserve norms, we use a Fell's absorption trick by tensoring with a copy of $G$ throughout the procedure. Interestingly, this is the reason why we need exactness in the case that we replace $\mathbb{Z}$ with an arbitrary residually finite group $L$, and also the reason why one cannot automatically use these arguments to build PPF models. Continuing with the argument, we crucially use a remarkable isomorphism theorem (Theorem \ref{isomorphism thm}), which says that a \emph{universal} Toeplitz--Pimnser type algebra satisfying just some simple orthogonality conditions, is automatically isomorphic to a \emph{reduced} amalgamated free product of the type we have been considering. All we need to do is check the prescribed equalities with the ``creation operators'', and we are then in business for strong convergence.

We would like to remark that the usage of isomorphism theorems of the above nature stand on tradition in operator algebras, as explained in Section 4.6 of \cite{brown2008textrm}.  For instance, it is used crucially in the proof of permanence of exactness for reduced amalgamated free products \cite{dykema2001exactness}, and additionally weak exactness \cite{toyosawa2025weak}. In the context of random matrices, the appendix of Shlyakhtenko in the paper \cite{male2011norm} features these ideas. More recently, a universality statement in the context of Toeplitz algebras associated to RAAGs, due to Crisp and Laca \cite{crisp2002toeplitz}, has been employed (albeit in combination with other deep random matrix theory methods \cite{Collins2022OnTO}) in the PMF work \cite{magee2025strong}. Our current work reveals in hindsight that these were in fact potent early insights of Shlyakhtenko, and more recently Magee--Thomas.  

\subsection*{Note to the reader} We strongly suggest the reader who is not familiar with the $C^*$-algebraic considerations herein to read our proof in parallel with Section 4.6 of \cite{brown2008textrm}, which has a very neat treatment of some constructions in $C^*$-correspondence theory, starting with the breakthrough of Pimsner \cite{pimsner1997class}, to the general \emph{gauge-invariant uniqueness theorem} \cite{FowlerMuhlyRaeburn2003} and its consequences including the isomorphism theorem we use here.

\subsection{Acknowledgments} 
We thank M. Junge for an insightful question that stimulated this work during the workshop ``Graduate mini-school on sofic groups and Connes embedding problem'' at UT Austin during March 18--22, 2026. We thank UT Austin and the organizers for providing a stimulating environment to work in. We are indebted to R. van Handel for several long correspondences and his extremely generous feedback and support. We are grateful to A. Song for his encouragement, and generous feedback.  We thank D. Jekel and N. Ozawa for helpful comments and corrections. We also thank B. Hayes, F. Fournier-Facio and D. Shlyakhtenko for helpful conversations regarding references and M. Linton for his suggestion of a quick proof of Lemma 2.5. The second author is grateful to U. Bader for his encouragement and helpful conversations. This work was supported partially by the second author's NSF grant DMS 2350049. The fourth author was supported by the Engineering and Physical Sciences Research Council (UK), grant EP/X026647/1. 

\subsection{Open Access and Data Statement}

For the purpose of Open Access, the authors have applied a CC BY public copyright license to any Author Accepted Manuscript (AAM) version arising from this submission. Data sharing is not applicable to this article as no new data were created or analyzed in this work.

\section{Proof of main result}

We will use standard notations in $C^*$-algebras, in particular Chapter 4 of the book \cite{brown2008textrm}, which is a great reference for a reader who is familiar with basic aspects of $C^*$-algebra theory. Unless otherwise mentioned, amalgamated free products of $C^*$-algebras equipped with conditional expectations will be in the reduced sense \cite{voiculescu1985symmetries}\footnote{We alert the reader to not be confused with MF for full $C^*$-algebras or for full amalgamated free products (see for instance \cite{shulman2026mfpropertyamalgamatedfree}) as the problems and techniques in that situation are of a very different nature.}. Additionally, all ultraproducts will be with respect to the operator norm, and all tensor products will be minimal. We recall the following for notational purposes. 
\begin{defn}
    Let $B\subset A$ be an inclusion of $C^*$-algebras, and $E:A\to B$ be a faithful conditional expectation. Then define the \emph{Toeplitz--Pimsner algebra} of $E$ to be the universal $C^*$-algebra
    \[\cT(E) := C^*\langle a\in A\,,T \big| \,T^*T=1,\, T^*aT = E(a)\rangle. \]

    $T$ is called the \emph{creation operator}. We recall that there is a canonical non-degenerate conditional expectation $E': \cT(E) \to A$ satisfying $E'(a_1Tb_1b_2^\ast T^\ast a_2^\ast) = 0$ for all $a_i, b_i \in A$ \cite[Theorem 4.6.6(2)]{brown2008textrm}.

    A special case of the above is the \emph{classical Toeplitz algebra}, which we will denote by $\cT$. It corresponds to the case where $A = B = \C$ and $E = \text{id}_\C$. An alternative description is that $\cT$ is the universal $C^\ast$-algebra of an isometry. In this case, the canonical conditional expectation yields a non-degenerated state $\omega$ on $\cT$, which is called the \emph{vacuum state}, and we shall always understand $\cT$ as equipped with this state.
\end{defn}

The following Theorem \ref{isomorphism thm} in essence goes back to the breakthrough of Pimsner \cite{pimsner1997class}, but can be concretely deduced from the gauge-invariant uniqueness \cite{FowlerMuhlyRaeburn2003} combined with the observation that the concrete Toeplitz-Pimsner algebra defined on the Fock space representation  is the reduced amalgamated free product \cite{shlyakhtenko1999valued}. For the convenience of the reader we provide a short argument (following \cite{brown2008textrm} wherein this result also appears). Our inspiration for employing this result in our context comes from the recent outburst of developments surrounding selfless $C^*$-algebras (\cite{robertselfless, amrutam2025strictcomparisonreducedgroup, raum2025strictcomparisontwistedgroup, HKER, ozawa2025proximalityselflessnessgroupcalgebras, hayes2025selfless, relativeselfless, elayavalli2025negativeresolutioncalgebraictarski, vigdorovich2025structural, vigdorovich2026selflessreducedcalgebraslinear, flores2026purenessstablerankreduced, avni2025mixedidentitieslineargroups, BradfordSisto2026,  yang2025extreme,  fmmm2025selflessfreeprod, selflessAFPs}), especially the proof technique of Ozawa in \cite{ozawa2025proximalityselflessnessgroupcalgebras} and also the upcoming key work \cite{relativeselfless}. The first, second and fourth authors are grateful to M. Junge and L. Robert for the collaboration on \cite{relativeselfless}.

%Before stating and proving Theorem \ref{isomorphism thm}, we emphasize that the beautiful gauge-invariant uniqueness theorem (this is explicitly \cite[Theorem 4.6.18]{brown2008textrm}) is used in the proof of Theorem \ref{isomorphism thm}. We highly encourage the reader to look at Section 4.6 in the treatise \cite{brown2008textrm} which offers a clean treatment of these aspects.

\begin{thm}\label{isomorphism thm}
    Let the notation be as above. Then the $C^*$-subalgebra generated by $A$ and $\frac{1}{2}(T+T^*)$ in $\cT(E)$ is isomorphic to $A\ast_{B} (B\otimes C[-1,1])$, where $\frac{1}{2}(T+T^*)$ is the semicircular in $C[-1,1]$.
\end{thm}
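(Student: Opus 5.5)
The strategy is to compare two concrete representations of the same data. One is the universal algebra $\cT(E)$. The other is the Fock-space model of the reduced amalgamated free product. The gauge-invariant uniqueness theorem \cite[Theorem 4.6.18]{brown2008textrm} will identify them. Set $X := A\otimes_B B\otimes_B\cdots$. More precisely, let $\mathcal{E}$ be the Hilbert $A$-bimodule $L^2(A,E)$ (the completion of $A$ for $\langle a,b\rangle = E(a^*b)\in B$). Form the full Fock module $\mathcal{F} = A \oplus \bigoplus_{n\ge1} \mathcal{E}^{\otimes_A n}$. On it, $A$ acts on the left and the creation operator is $T\xi = \hat 1\otimes \xi$. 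In this model $T^*T=1$ and $T^*aT = E(a)$ hold. The second relation holds because $\langle \hat 1, a\hat 1\rangle = E(a)$.

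The first step is to invoke universality to get a surjection $\pi:\cT(E)\to C^*(A,T)\subset \mathcal{L}(\mathcal{F})$. Next, check the hypotheses of gauge-invariant uniqueness. The gauge action $\gamma_z(T)=zT$, $\gamma_z|_A=\mathrm{id}$ exists on $\cT(E)$ by universality and on the Fock model by conjugating with the grading unitary $\sum_n z^n P_n$. Injectivity of $\pi$ on $A$ is clear, since $A$ acts faithfully on the $0$-th summand, $E$ being faithful. Finally, the Toeplitz (no Cuntz–Pimsner covariance) condition holds: $a(1-TT^*)\neq 0$ for $a\ne0$, since $1-TT^*$ is the projection onto the vacuum summand $A$. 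Hence $\pi$ is an isomorphism.

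It then remains to identify $C^*(A,\tfrac12(T+T^*))$ inside the Fock model with $A*_B(B\otimes C[-1,1])$. For this I follow Shlyakhtenko's operator-valued semicircular picture \cite{shlyakhtenko1999valued}. Put $s=\tfrac12(T+T^*)$. The first claim is that $C^*(B,s)\cong B\otimes C[-1,1]$. Since $T$ commutes with $B$, indeed $bT\xi = b\hat1\otimes\xi = \hat1 b\otimes \xi = T b\xi$, the element $s$ commutes with $B$. Compressed to the vacuum, the moments of $s$ are those of the (scalar) free semicircle of variance $1/4$, supported on $[-1,1]$. Using that $E':\cT(E)\to A$ is given by $\langle \hat 1,\cdot\,\hat1\rangle$, we get $E'(b_0 s^{k_1} b_1\cdots) = b_0b_1\cdots \cdot \omega(s^{\sum k_i})$. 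Thus $C^*(B,s)$ with $E'$ is the tensor product $B\otimes C[-1,1]$ with $\mathrm{id}\otimes$(semicircle state).

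Next comes freeness with amalgamation over $B$. Here we must show $E'$ vanishes on alternating products $a_1 p_1 a_2 p_2\cdots$ with $a_i\in A\ominus B$ (so $E(a_i)=0$) and $p_i\in \ker E'|_{C^*(B,s)}$. Each $p_i$ can be approximated by $B$-combinations of Chebyshev-type polynomials $U_k(s)$, $k\ge1$. These act on the vacuum-generated part as words in $T$ and $T^*$ whose creation count exceeds their annihilation count. Inserting $a_i$ with $E(a_i)=0$ kills any annihilation reaching back across it, exactly as in the classical Fock-space proof of freeness. This yields $E'(\cdots)=0$.

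Finally, one must check that the representation on $\mathcal{F}$ restricted to $C^*(A,s)$ is the reduced one. I would show that the submodule generated by $\hat1$ under $C^*(A,s)$ is the whole free product module. This follows because $E'$ is faithful on $\cT(E)$ (it is a vacuum compression of a faithful Fock representation), so the GNS of $E'$ restricted to the subalgebra is faithful. By the uniqueness of reduced amalgamated free products with faithful expectations, we get the isomorphism. I expect the freeness computation, specifically organizing the alternating words and the interplay of $E(a)=0$ with the annihilators, to be the main technical obstacle. Since $E$ is only $B$-valued, one must track how $T^*aT=E(a)$ cancels in longer words rather than just on the vacuum.
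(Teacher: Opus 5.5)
Your route is viable and differs from the paper's. You identify $\cT(E)$ with its Fock model via gauge-invariant uniqueness, then redo Shlyakhtenko's identification of $C^*(A,s)$ by hand. As written, though, it has a genuine gap, and the gap is not in the freeness computation. That part is routine: applied to the vacuum, $U_k(T+T^*)=\sum_j T^jT^{*(k-j)}$ contributes only $T^k$, since $T^*$ kills both the vacuum and $\hat a\otimes\xi$ with $E(a)=0$.

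\textbf{The gap is in your last step.} You claim $E'$ is faithful on $\cT(E)$, and it is not. Since $T^*$ annihilates the vacuum summand, $E'(TT^*)=0$ while $TT^*\neq 0$; already for $A=B=\C$ the vacuum state on $\cT$ is not faithful. $E'$ is only non-degenerate, and faithfulness of its GNS representation on all of $\cT(E)$ does not pass to the subalgebra $C^*(A,s)$. Yet this is exactly what decides the statement. Your moment and freeness computations, which should be phrased for the $B$-valued expectation $\Phi=E\circ E'$, only show that the GNS representation $\pi_\Phi$ maps $C^*(A,s)$ onto $A*_B(B\otimes C[-1,1])$. That makes the reduced free product a \emph{quotient} of $C^*(A,s)$. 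The theorem, and its use in Lemma~\ref{lem:2.3} (where one needs a map out of the reduced free product), require $\pi_\Phi$ to be injective.

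\textbf{How to close it.} Prove cyclicity directly:
\begin{itemize}
\item Since $2s\eta=\hat 1\otimes\eta+T^*\eta$ with $T^*\eta$ of smaller tensor length, and $A\subset C^*(A,s)$ acts on the first factor, induction on length gives $\overline{C^*(A,s)\hat 1}=\mathcal F$.
\item Since $\xi\otimes\hat c=\xi c\otimes\hat 1$, the GNS module of $\Phi$ is then all of $\mathcal F\otimes_A L^2(A,E)$.
\item There $x\mapsto x\otimes 1$ is injective, because $A$ acts faithfully on $L^2(A,E)$ ($E$ faithful).
\end{itemize}

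\textbf{Two smaller points.} First, $1-TT^*$ is not the vacuum projection when $A\neq B$: vectors $\hat a\otimes\xi$ with $E(a)=0$ are orthogonal to $\operatorname{ran}T=\hat 1\otimes\mathcal F$. The hypothesis of the uniqueness theorem is $A\cap\overline{\operatorname{span}}\{a_1Tb_1b_2^*T^*a_2^*\}=\{0\}$. Your condition ``$a(1-TT^*)\neq 0$ for $a\neq 0$'' is strictly weaker. It holds, together with a gauge action, in the Cuntz--Pimsner quotient for $\C^2\supset\C$ with the uniform expectation, which is a proper quotient of $\cT(E)$. The correct condition does hold in your model, because those operators vanish on the vacuum summand, where $A$ acts faithfully. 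Second, $\mathcal F$ must be built from the correspondence $A\otimes_B A$, e.g.\ $\mathcal F_n=L^2(A,E)^{\otimes_B n}\otimes_B A$; $\otimes_A$-powers of the right $B$-module $L^2(A,E)$ are undefined.

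\textbf{Comparison with the paper.} The paper sidesteps all the Fock-space bookkeeping. It maps $\cT(E)$ onto the reduced product $A*_B(B\otimes\cT)$ by $T\mapsto S$, where $S^*aS=E(a)$. Injectivity comes from the gauge-invariant uniqueness theorem by checking $E'(SaS^*)=0$. It then restricts to the subalgebra generated by $A$ and $B\otimes C^*(\tfrac12(S+S^*))=B\otimes C[-1,1]$. Reducedness is thus built into the target, and no faithfulness-of-GNS argument is needed.
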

\begin{proof}
    First consider $A\ast_B (B\otimes \cT)$ where $\cT$ is the classical Toeplitz algebra. Let $S$ be the generating isometry in $\cT$. Then for $a\in A$, $S^*aS= E(a)$. Hence there is a map $\pi:\cT(E)\to A\ast_B (B\otimes \cT)$ which restricts to identity on $A$ and sends $T\mapsto S$. This map is surjective as $A\ast_B (B\otimes \cT)$ is generated by $A$ and $\cT$.

    We will prove $\pi$ is injective using the \emph{gauge-invariant uniqueness theorem} for Toeplitz--Pimsner algebras \cite[Theorem 4.6.18]{brown2008textrm} (this is originally due to \cite{FowlerMuhlyRaeburn2003}). For this, note that the canonical conditional expectation $E':A\ast_B (B\otimes \cT)\to A$ acts by $\operatorname{id}\otimes \omega$ on the second factor, where $\omega$ is the vacuum state on the classical Toeplitz algebra. So we have, for $a \in A$,
    \[E'(SaS^*) = E'(SE(a)S^*)+E'(S(a-E(a))S^*)= E(a)E'(SS^*)=0,\]
    where we used that $S$ commutes with $E(a)\in B$. Thus, $\pi(A)\cap \overline{\operatorname{span}}\{a_1 Sb_1 b_2^*S^* a_2^*:a_1,b_1,b_2,a_2 \in A\}=\{0\}$, and hence $\pi$ satisfies the hypothesis of the gauge-invariant uniqueness theorem.

    Restricting to the subalgebra generated by $A$ and $\frac{1}{2}(T+T^*)$ in $\cT(E)$ gives the result.
\end{proof}

The following is the key argument in our proof. 

\begin{lem}\label{lem:2.3}
    Let $G$ be a group and $H$ a subgroup with $H=\bigcap_{i\in \N} H_i$ for some decreasing sequence of subgroups $H_i$. Let $K_i = \bigcap_{g \in G} gH_ig^{-1}$ be the normal core of $H_i$. Let $\cU$ be a free ultrafilter on $\N$. Then there is a trace-preserving embedding of $C^*$-algebras:
    \[C_r^*(G\ast_H (H\times\Z)) \hookrightarrow \prod_\cU C_r^*([G/K_i\ast_{H_i/K_i} (H_i/K_i\times\Z)] \times G).\]

    Moreover, this embedding lifts to a sequence of group homomorphisms
    \[G \ast_H (H \times \Z) \to [G/K_i\ast_{H_i/K_i} (H_i/K_i\times\Z)] \times G\] which sends $g\in G$ to $(gK_i, g)$ and $n\in \Z$ to $(n,e)$. 

\end{lem}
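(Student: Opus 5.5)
The plan is to realize $C^*_r(G\ast_H(H\times\Z))$ as a subalgebra of the Toeplitz--Pimsner algebra $\cT(E)$ via Theorem \ref{isomorphism thm}. We then build a representation of $\cT(E)$ into the ultraproduct by using the universal property coordinatewise, and prove it is faithful with the gauge-invariant uniqueness theorem.

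\textbf{Step 1: reduction to $\cT(E)$.} Set $A=C^*_r(G)$ and $B=C^*_r(H)$, and let $E:A\to B$ be the canonical trace-preserving expectation, $E(\lambda_g)=1_{g\in H}\lambda_g$. Then
\[C^*_r(G\ast_H(H\times\Z))\cong A\ast_B(B\otimes C(\mathbb{T})),\]
where $C(\mathbb{T})$ carries the Haar state. Choose a continuous $f:[-1,1]\to\mathbb{T}$ pushing the semicircle law forward to Haar measure. For instance, take $f(t)=e^{2\pi i F(t)}$ with $F$ the semicircle distribution function; this is continuous because $f(\pm1)=1$. Then $u=f(s)$ is a Haar unitary in $C[-1,1]$. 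By functoriality of reduced amalgamated free products under expectation-compatible inclusions, $A\ast_B(B\otimes C(\mathbb{T}))$ sits inside $A\ast_B(B\otimes C[-1,1])$. By Theorem \ref{isomorphism thm}, the latter is the subalgebra of $\cT(E)$ generated by $A$ and $\tfrac12(T+T^*)$. It therefore suffices to embed that subalgebra of $\cT(E)$ into the ultraproduct.

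\textbf{Step 2: coordinatewise models.} Let $A_i=C^*_r(G/K_i)$ and $B_i=C^*_r(H_i/K_i)$; these are finite dimensional. Let $E_i:A_i\to B_i$ be the canonical expectation, and let $S_i$ be the generating isometry of $\cT$ inside $A_i\ast_{B_i}(B_i\otimes\cT)$. Applying Step 1 to $H_i/K_i<G/K_i$, the element $u_i=f(\tfrac12(S_i+S_i^*))$ is a Haar unitary. Moreover, $A_i$ and $u_i$ generate a copy of $D_i:=C^*_r(G/K_i\ast_{H_i/K_i}(H_i/K_i\times\Z))$, with $u_i$ playing the role of the generator of $\Z$.

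Now work in $M=\prod_\cU\big(A_i\ast_{B_i}(B_i\otimes\cT)\big)\otimes C^*_r(G)$. Define $\rho(\lambda_g)=(\lambda_{gK_i}\otimes\lambda_g)_\cU$. By Fell's absorption, $\lambda_{G/K_i}\otimes\lambda_G$ is a multiple of $\lambda_G$, and the trace of $\lambda_{gK_i}\otimes\lambda_g$ is $\delta_{g,e}$. Hence $\rho$ extends to a faithful trace-preserving copy of $A$ in $M$. Put $\mathbf T=(S_i\otimes1)_\cU$, so that $\mathbf T^*\mathbf T=1$. For each $g\in G$,
\[\mathbf T^*\rho(\lambda_g)\mathbf T=(1_{g\in H_i}\lambda_{gK_i}\otimes\lambda_g)_\cU .\]
Since the $H_i$ decrease with intersection $H$, we have $g\in H_i$ for $\cU$-almost every $i$ exactly when $g\in H$. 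So this equals $\rho(E(\lambda_g))$, and by linearity and continuity $\mathbf T^*\rho(a)\mathbf T=\rho(E(a))$ for all $a\in A$. The universal property then gives a $*$-homomorphism $\pi:\cT(E)\to M$ with $\pi|_A=\rho$ and $\pi(T)=\mathbf T$.

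\textbf{Step 3: faithfulness, which I expect to be the main obstacle.} I would apply the gauge-invariant uniqueness theorem, as in the proof of Theorem \ref{isomorphism thm}. Injectivity on $A$ is already known. For the other hypothesis, use the ultraproduct of the expectations $E_i'\otimes\mathrm{id}$, where $E_i'$ is the canonical expectation of $A_i\ast_{B_i}(B_i\otimes\cT)$ onto $A_i$ given by $\mathrm{id}\otimes\omega$. This gives a map $\Phi$ on $M$ that fixes $\pi(A)$ and kills $\pi(a_1Tb_1b_2^*T^*a_2^*)$, by the same computation $E'(SaS^*)=0$ made in each coordinate. Hence any $x$ in $\pi(A)\cap\overline{\operatorname{span}}\{\pi(a_1Tb_1b_2^*T^*a_2^*)\}$ satisfies $x=\Phi(x)=0$. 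Some care is needed in checking that the version of the uniqueness theorem used really requires only this, and that traces are preserved, since $\tau\circ\Phi$ is the ultraproduct trace.

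\textbf{Step 4: restriction and the lift.} Restrict $\pi$ to the copy of $C^*_r(G\ast_H(H\times\Z))$ from Step 1. Its image is generated by $(\lambda_{gK_i}\otimes\lambda_g)_\cU$ and $\pi(u)=(u_i\otimes1)_\cU$, and so lies in $\prod_\cU D_i\otimes C^*_r(G)=\prod_\cU C^*_r([G/K_i\ast_{H_i/K_i}(H_i/K_i\times\Z)]\times G)$. The map is trace-preserving and injective. By construction it is induced by the group homomorphisms $g\mapsto(gK_i,g)$ and $n\mapsto(n,e)$; these are well defined because $H\subset H_i$, so $h\mapsto(hK_i,h)$ commutes with the image of $\Z$.
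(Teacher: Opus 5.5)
Your argument is correct and shares the paper's architecture. You embed $C^*_r(G\ast_H(H\times\Z))$ into $\cT(E)$ via Theorem~\ref{isomorphism thm} and a Haar unitary $f(s)\in C[-1,1]$, and tensor with $C^*_r(G)$ so that Fell absorption makes $\lambda_g\mapsto(\lambda_{gK_i}\otimes\lambda_g)_\cU$ isometric. You then verify $\mathbf T^*\rho(a)\mathbf T=\rho(E(a))$ from $H_i\searrow H$ and invoke the universal property of $\cT(E)$.

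The real difference is where injectivity comes from. The paper never shows that $\cT(E)\to\prod_\cU\cT(E_i)\otimes C^*_r(G)$ is faithful. It only needs the resulting $\ast$-homomorphism on $C^*_r(G\ast_H(H\times\Z))$ to exist, i.e.\ continuity of the group-algebra map in the reduced norm. Injectivity is then automatic, because that map is trace-preserving and the canonical trace on a reduced group $C^*$-algebra is faithful.

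You instead run the gauge-invariant uniqueness theorem a second time, on $\pi$ itself, using the ultraproduct $\Phi$ of the expectations $E_i'\otimes\mathrm{id}$. Your check that $\Phi$ fixes $\pi(A)$ and kills the closed span of the $\pi(a_1Tb_1b_2^*T^*a_2^*)$ is right, and it gives exactly the hypothesis used in the proof of Theorem~\ref{isomorphism thm}. This yields more: all of $\cT(E)$ embeds, with no appeal to a faithful trace. But it is redundant for the lemma.

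It also does not spare you the trace-preservation claim, which you assert without checking. That check is routine. A nontrivial element of $G$ has nontrivial $G$-coordinate. Any element outside $G$ has a reduced form with letters alternately in $G\setminus H$ and $(H\times\Z)\setminus H$. For $\cU$-almost every $i$, such a word maps to a reduced word of the same length in $G/K_i\ast_{H_i/K_i}(H_i/K_i\times\Z)$, since each of the finitely many $G$-letters eventually leaves $H_i$. Once this is in place, Step~3 can be deleted.

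Working in $A_i\ast_{B_i}(B_i\otimes\cT)$ instead of $\cT(E_i)$ is a small simplification, since the finite-level inclusions then need only functoriality of reduced amalgamated free products.

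There are two harmless slips. First, $A_i$ and $B_i$ need not be finite dimensional, since the lemma does not assume $[G:H_i]<\infty$. Second, $f$ is continuous because $F$ is, not because $f(\pm1)=1$.
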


\begin{proof}
    Consider the sequence of group homomorphisms
    \begin{equation*}
        G \ast_H (H \times \Z) \to [G/K_i \ast_{H_i/K_i} (H_i/K_i \times \Z)] \times G
    \end{equation*}
    defined by sending $g\in G$ to $(gK_i, g)$ and $n\in \Z$ to $(n,e)$. It is easy to check that this sequence of group homomorphisms is trace-preserving in the limit, so it induces a trace-preserving embedding,
    \begin{equation}\label{eqn: group alg map}
    \begin{split}
        \C[G \ast_H (H \times \Z)] \hookrightarrow &\prod_\cU C_r^\ast([G/K_i \ast_{H_i/K_i} (H_i/K_i \times \Z)] \times G)\\
        = &\prod_\cU C_r^\ast(G/K_i \ast_{H_i/K_i} (H_i/K_i \times \Z)) \otimes C_r^\ast(G).
    \end{split}
    \end{equation}

    It suffices to show this embedding is continuous under the reduced norm. By Fell's absorption principle, this is indeed isometric on $\C[G]$ and thus induces an embedding,
    \begin{equation}\label{eqn: embedding}
        C_r^\ast(G) \hookrightarrow \prod_\cU C_r^\ast(G/K_i \ast_{H_i/K_i} (H_i/K_i\times\Z)) \otimes C_r^\ast(G).
    \end{equation}

    Let $E:C_r^\ast(G) \to C_r^\ast(H)$ and $E_i: C_r^\ast(G/K_i) \to C_r^\ast(H_i/K_i)$ be the respective conditional expectations. Let $q_i: \C[G] \to C_r^\ast(G/K_i)$ be the map induced from the quotient map $G \to G/K_i$. Identifying the image of the embedding above with $C_r^\ast(G)$, it is easy to check that the conditional expectation $E$ under this identification sends $(q_i(\lambda_g) \otimes \lambda_g)_\cU$ to $(q_i(E(\lambda_g)) \otimes \lambda_g)_\cU$ for $g \in G$.

    Fix a trace-preserving embedding $C_r^\ast(\Z) \subset C[-1, 1]$. Then,
    \begin{equation}\label{eqn: group embedding}
    \begin{split}
        &C_r^\ast(G/K_i \ast_{H_i/K_i} (H_i/K_i\times\Z)) \otimes C_r^\ast(G)\\
        \subset &[C_r^\ast(G/K_i) \ast_{C_r^\ast(H_i/K_i)} (C_r^\ast(H_i/K_i) \otimes C[-1,1])] \otimes C_r^\ast(G).
    \end{split}
    \end{equation}

    Per Theorem \ref{isomorphism thm}, we furthermore have $C_r^\ast(G/K_i) \ast_{C_r^\ast(H_i/K_i)} (C_r^\ast(H_i/K_i) \otimes C[-1,1]) \subset \cT(E_i)$, so,
    \begin{equation}\label{eqn: embedding 2}
    \begin{split}
        &\prod_\cU C_r^\ast(G/K_i \ast_{H_i/K_i} (H_i/K_i\times\Z)) \otimes C_r^\ast(G)\\
        \subset &\prod_\cU [C_r^\ast(G/K_i) \ast_{C_r^\ast(H_i/K_i)} (C_r^\ast(H_i/K_i) \otimes C[-1,1])] \otimes C_r^\ast(G)\\
        \subset &\prod_\cU \cT(E_i) \otimes C_r^\ast(G),
    \end{split}
    \end{equation}
    where we note that the embedding acts as the identity map on the tensor component $C_r^\ast(G)$, restricts to the natural embedding $C_r^\ast(G/K_i) \hookrightarrow \cT(E_i)$, and sends the semicircular generator of $C[-1, 1]$ to $\frac{1}{2}(T_i + T_i^\ast)$ in $\cT(E_i)$.
    
    Now note that, for $g \in G$,
    \begin{equation*}
        (T_i^\ast q_i(\lambda_g)T_i \otimes \lambda_g)_\cU = (E_i(q_i(\lambda_g)) \otimes \lambda_g)_\cU = (q_i(E(\lambda_g)) \otimes \lambda_g)_\cU
    \end{equation*}
    where the latter equality can be verified easily using the fact that $H_i$ decreases to $H$. Indeed, if $g \in H$, then $q_i(\lambda_g) = \lambda_{gK_i} \in C_r^\ast(H_i/K_i)$, so
    \begin{equation*}
        E_i(q_i(\lambda_g)) \otimes \lambda_g = q_i(\lambda_g) \otimes \lambda_g = q_i(E(\lambda_g)) \otimes \lambda_g.
    \end{equation*}

    If, on the other hand, $g \notin H$, then for large enough $i$, $g \notin H_i$. Thus, $gK_i \notin H_i/K_i$. Hence, again for large enough $i$,
    \begin{equation*}
        E_i(q_i(\lambda_g)) \otimes \lambda_g = 0 = q_i(E(\lambda_g)) \otimes \lambda_g.
    \end{equation*}
    
    This proves the claimed equality. So, by taking linear combinations and norm limits, this implies the copy of $C_r^\ast(G)$ in $\prod_\cU \cT(E_i) \otimes C_r^\ast(G)$, via the embeddings in Equations (\ref{eqn: embedding}) and (\ref{eqn: embedding 2}), together with the isometry $(T_i)_\cU$, satisfies the universal property defining $\cT(E)$. Hence, there is a $\ast$-homomorphism
    \begin{equation*}
        \cT(E) \to \prod_\cU \cT(E_i) \otimes C_r^*(G)
    \end{equation*}
    which extends the embedding of $C_r^\ast(G)$ into $\prod_\cU \cT(E_i) \otimes C_r^\ast(G)$ and sends $T$ to $(T_i)_\cU$. By Theorem \ref{isomorphism thm}, restricting to the algebra generated by $C_r^\ast(G)$ and $\frac{1}{2}(T + T^\ast)$, we then have a $\ast$-homomorphism,
    \begin{equation}\label{eqn: semicircular map}
    \begin{split}
        &C_r^\ast(G) \ast_{C_r^\ast(H)} (C_r^\ast(H) \otimes C[-1,1])\\
        \to &\prod_\cU [C_r^\ast(G/K_i) \ast_{C_r^\ast(H_i/K_i)} (C_r^\ast(H_i/K_i) \otimes C[-1,1])] \otimes C_r^\ast(G).
    \end{split}
    \end{equation}

    This map sends $\lambda_g \in C_r^\ast(G)$ to $\lambda_{gK_i} \otimes \lambda_g$ and restricts to the identity map on $C[-1, 1]$, as it sends $\frac{1}{2}(T + T^\ast)$ to $\left(\frac{1}{2}(T_i + T_i^\ast)\right)_\cU$.

    Again, using the previously fixed trace-preserving embedding $C_r^\ast(\Z) \subset C[-1, 1]$ as in Equation (\ref{eqn: group embedding}), we can restrict to a $\ast$-homomorphism,
    \begin{equation*}
    \begin{split}
        &C_r^\ast(G \ast_H (H \times \Z))\\
        = &C_r^\ast(G) \ast_{C_r^\ast(H)} (C_r^\ast(H) \otimes C_r^\ast(\Z))\\
        \hookrightarrow &\prod_\cU [C_r^\ast(G/K_i) \ast_{C_r^\ast(H_i/K_i)} (C_r^\ast(H_i/K_i) \otimes C_r^\ast(\Z))] \otimes C_r^\ast(G)\\
        = &\prod_\cU C_r^\ast([G/K_i \ast_{H_i/K_i} (H_i/K_i \times \Z)] \times G).
    \end{split}
    \end{equation*}

    Since the map in Equation (\ref{eqn: semicircular map}) acts as the identity map on $C[-1, 1]$, the map above acts as the identity map on $\Z$. Moreover, since the map in Equation (\ref{eqn: semicircular map}) sends $\lambda_g \in C_r^\ast(G)$ to $\lambda_{gK_i} \otimes \lambda_g$, the map above sends $g \in G$ to $(gK_i, g) \in [G/K_i \ast_{H_i/K_i} (H_i/K_i \times \Z)] \times G$. This means the map above is exactly the extension of the embedding in Equation (\ref{eqn: group alg map}), which proves that the embedding there is indeed continuous under the reduced norm, as claimed.
\end{proof}

We note the following fact which is certainly well known to experts, see for instance Lemma 7.1 of \cite{loudermagee2025limitgroups}. For the benefit of the reader we include a proof.

\begin{lem}\label{lem:strong-MF-facts}
    \;
    \begin{enumerate}
        \item Virtually free groups are PFF;
        \item If $G$ and $H$ are PMF and $G$ is exact, then $G \times H$ is PMF. If they are in addition PFF, then the product is PFF as well.
    \end{enumerate}
\end{lem}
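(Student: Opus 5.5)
For part (1), I will first reduce to the free case. A virtually free group $\Gamma$ contains a finite index free subgroup $F$, and after passing to the normal core I may assume $F \trianglelefteq \Gamma$ with $[\Gamma:F]=m$.

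By Bordenave--Collins \cite{BordenaveCollins2019}, $F$ is PPF, hence PFF. So there are homomorphisms $\rho_n: F \to S_{N_n}$ such that the representations $\sigma_n$ on the orthogonal complement $\mathbf 1^\perp \subset \C^{N_n}$ of the constants converge strongly to $\lambda_F$. The image of each $\sigma_n$ is finite. (For trivially free or finite $\Gamma$ the statement is immediate, via $\lambda_\Gamma$ itself.)

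I then form the induced representations $\pi_n = \mathrm{Ind}_F^\Gamma \sigma_n$. These are genuine homomorphisms into $U(m\dim\sigma_n)$. Their images are finite, because $\pi_n$ factors through the finite group $\Gamma/\ker$: its kernel contains the finite index subgroup $\bigcap_{\gamma}\gamma\ker(\sigma_n)\gamma^{-1}$.

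For strong convergence, I write $C^*_r(\Gamma)$, and likewise the image algebra, as a crossed-product-like module over $C^*_r(F)$. Fixing coset representatives $s_1,\dots,s_m$, any $x=\sum c_\gamma \lambda_\gamma$ is represented on $\ell^2(\Gamma)=\bigoplus_j \ell^2(F)s_j$ by an $m\times m$ matrix $[x_{jk}]$ with entries $x_{jk}\in \C[F]$. The operator $\pi_n(x)$ is the same matrix of polynomials evaluated at $\sigma_n$. Since $\M_m(C^*_r(F))$ is the norm completion and strong convergence passes to matrix amplifications (the ultraproduct map is a $*$-homomorphism, so it is isometric on $\M_m$), I get $\|\pi_n(x)\|\to\|\lambda(x)\|$. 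The traces are handled similarly: the normalized trace of $\pi_n(\gamma)$ vanishes for $\gamma\notin F$ and tends to $0$ for $\gamma\in F\setminus\{e\}$.

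For part (2), I take the tensor models. Let $u_n: G\to U(d_n)$ and $v_n: H \to U(e_n)$ be PMF (resp.\ PFF) sequences, and set $w_n(g,h)=u_n(g)\otimes v_n(h)$. These are homomorphisms with finite image whenever both factors have finite images, and the traces factor, so the trace condition holds in the limit.

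The main obstacle is norm convergence on $\C[G\times H]=\C[G]\odot\C[H]$. I get there in two steps.

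\textbf{Step 1.} Using the map $C^*_r(H)\to\prod_\cU \M_{e_n}$, I show $\|(\lambda_G\otimes v_n)(x)\|\to \|x\|_{C^*_r(G)\otimes C^*_r(H)}$. This holds because the minimal tensor product with the fixed algebra $C^*_r(G)$ preserves injectivity. The subtle point is the ultraproduct: here exactness of $G$ is used, since it gives $C^*_r(G)\otimes \prod_\cU \M_{e_n}$ embedding into $\prod_\cU (C^*_r(G)\otimes \M_{e_n})$ isometrically. Exactness is equivalent to this commutation of $\otimes_{\min}$ with ultraproducts/quotients of $\ell^\infty$-products (Kirchberg).

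\textbf{Step 2.} Swapping roles, for each fixed $n$ I use that $C^*_r(G)\to\prod_\cU \M_{d_k}$ is injective, and tensoring with the finite-dimensional $\M_{e_n}$ is harmless, so $\|(u_k\otimes v_n)(x)\|\to\|(\lambda_G\otimes v_n)(x)\|$ as $k\to\infty$.

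A diagonal argument over the countable group and a dense countable set of coefficients then produces a single sequence $u_{k(n)}\otimes v_n$ converging strongly. The upper bound $\limsup\|\cdot\|\le\|x\|$ needs care, and Step 1 with exactness supplies it. The lower bound comes from trace convergence together with the faithfulness of the trace on $C^*_r(G\times H)$.
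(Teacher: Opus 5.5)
Your proposal is correct and follows essentially the paper's route. For (1) you induce strongly convergent finite-image representations from a finite-index free subgroup, using Bordenave--Collins. For (2) you use exactness of $C^*_r(G)$ to commute the minimal tensor product with the ultraproduct of matrix algebras (the paper phrases this with $\prod/\bigoplus$ instead), then approximate $C^*_r(G)$ entrywise and diagonalize to get tensor models $u_{k(n)}\otimes v_n$. Your closing appeal to trace convergence and faithfulness for the lower bound is redundant but harmless, since Steps 1 and 2 already give two-sided norm convergence.
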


\begin{proof}
    For (1), free groups are PPF by \cite{BordenaveCollins2019} and so PFF. The result follows by considering induced representations. To be precise, assume $H$ is PFF and $G$ contains $H$ as a finite-index subgroup. Then we may take a sequence of strongly converging representations $\sigma_n: H \to M_{d(n)}(\C)$ that quotients through finite groups $H_n$. Then the induced representations $\phi_n: G \to M_{[G: H]}(M_{d(n)}(\C))$ converge strongly. Furthermore, the range of $\phi_n$ is contained within the following set of $[G:H]$-by-$[G:H]$ matrices with entries in $M_{d(n)}(\C)$:
        $\{A \in M_{[G: H]}(M_{d(n)}(\C)):\text{there is a permutation matrix }P \in M_{[G: H]}(\C)\text{ s.t. } A_{ij} = 0\text{ whenever }P_{ij} = 0\text{ and }A_{ij} \in \operatorname{range}(\sigma_n)\text{ whenever }P_{ij} = 1\}.$

    It is easy to see that the set above is finite, so the result follows.

    Item (2) for PMF follows by noting that, as $G$ is exact, the following is short exact sequence,
    \begin{equation*}
        0 \to \bigoplus_n M_{d(n)}(C_r^*(G)) \to \left(\prod_n M_{d(n)}(\C)\right) \otimes C_r^*(G) \to \frac{\prod_n M_{d(n)}(\C)}{\bigoplus_n M_{d(n)}(\C)} \otimes C_r^*(G) \to 0.
    \end{equation*}

    The middle term embeds into $\prod_n M_{d(n)}(C_r^*(G))$ naturally. Since $C_r^*(H)$ embeds into $\frac{\prod_n M_{d(n)}(\C)}{\bigoplus_n M_{d(n)}(\C)}$ in a way that can be lifted to a sequence of group homomorphisms, this implies $C_r^*(G \times H)$ embeds into
    \begin{equation*}
        \frac{\prod_n M_{d(n)}(C_r^*(G))}{\bigoplus_n M_{d(n)}(C_r^*(G))}
    \end{equation*}
    in a way that can be lifted to a sequence of group homomorphisms. Now, apply the assumption that $G$ is PMF to obtain the result.

    Item (2) for PFF follows by noting that, in the above, the finite-dimensional representations obtained that strongly converge to $G \times H$ is a sequence of tensor representations. More precisely, if $\sigma_m: G \to U(d_1(m))$ and $\phi_k: H \to U(d_2(k))$ converge strongly to $G$ and $H$, respectively, then there exist sequences of natural numbers $m(n)$ and $k(n)$ s.t. $\sigma_{m(n)} \otimes \phi_{k(n)}: G \times H \to U(d_1(m(n))d_2(k(n)))$ converges strongly to $G \times H$. Note that if $\sigma_m$ and $\phi_k$ factor through finite quotients, so does $\sigma_m \otimes \phi_k$. The result follows.
\end{proof}

The following is elementary.

\begin{lem}\label{lem:2.4}
    Let $G$ be a finite group and $H<G$. Then $G\ast_{H} (H\times \Z)$ is virtually free and thus in particular PFF.
\end{lem}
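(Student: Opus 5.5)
We show $\Gamma := G\ast_H (H\times\Z)$ has a finite-index free subgroup; the PFF conclusion then follows from Lemma \ref{lem:strong-MF-facts}(1).

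\textbf{Step 1: a homomorphism to a finite group.} Define $\phi:\Gamma\to G$ as the identity on $G$, as the projection onto $H$ on $H\times\Z$ (so the generator $t$ of $\Z$ is sent to $e$). These two maps agree on the amalgamated copy of $H$, so $\phi$ is well defined by the universal property of amalgamated free products. Its kernel $N$ has index $|G|<\infty$ in $\Gamma$.

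\textbf{Step 2: $N$ acts freely on the Bass--Serre tree.} Let $\mathcal{T}$ be the Bass--Serre tree of the splitting. Its vertex stabilizers are conjugates of $G$ or of $H\times\Z$, and its edge stabilizers are conjugates of $H$. For $N$ to act freely on $\mathcal{T}$, it suffices to check that $N$ meets every vertex stabilizer trivially:
\begin{itemize}
\item $N\cap \gamma G\gamma^{-1}=\{e\}$, since $\phi$ is injective on $G$ and $N$ is normal.
\item $N\cap \gamma(H\times\Z)\gamma^{-1}=\gamma(\{e\}\times\Z)\gamma^{-1}\cong\Z$, which is not trivial.
\end{itemize}
So $N$ is not free-acting, and the plan must be adjusted.

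\textbf{Step 3: the adjustment.} The fix is to use the presentation $\Gamma\cong (\ast_{\Z} G)\rtimes\Z$, where $\ast_\Z G$ denotes the infinite double of $G$ over $H$ recorded in the introduction. Alternatively, one can regard $\Gamma$ as the fundamental group of a finite graph of groups with finite vertex group $G$ and an HNN loop over $H$ with trivial twist. This HNN description is concrete: $\Gamma = \langle G, t \mid t h t^{-1}=h \text{ for } h\in H\rangle$, which matches $G\ast_H(H\times\Z)$.

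In the HNN picture all vertex and edge stabilizers are conjugates of the finite groups $G$ and $H$. Then $N=\ker\phi$, with $\phi$ still the map killing $t$, meets every vertex stabilizer trivially, because $\phi$ is injective on $G$. Therefore $N$ acts freely on the Bass--Serre tree of the HNN extension, so $N$ is free. As $N$ has finite index, $\Gamma$ is virtually free.

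\textbf{Expected obstacle.} The only delicate point is choosing the splitting whose vertex groups are finite. The given amalgam has the infinite vertex group $H\times\Z$, which defeats the naive argument of Step 2. Rewriting $\Gamma$ as the HNN extension of $G$ over $H$ with trivial associated automorphism resolves this. After that, the standard fact applies: a group acting on a tree with finite stabilizers and finitely many orbits of edges is virtually free. Alternatively, one can cite Karrass--Pietrowski--Solitar, which characterises virtually free groups as fundamental groups of finite graphs of finite groups.
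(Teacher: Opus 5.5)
Your argument is correct, but it gets freeness of $N=\ker\phi$ from a different splitting than the paper uses.

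The paper stays with the amalgam $G\ast_H(H\times\Z)$ and its Bass--Serre tree. There $N$ meets every edge stabilizer $\gamma H\gamma^{-1}$ and every conjugate of $G$ trivially. It meets each conjugate of $H\times\Z$ in the infinite cyclic group $\gamma(\{e\}\times\Z)\gamma^{-1}$. So the induced graph-of-groups decomposition of $N$ has trivial edge groups, with vertex groups that are trivial or $\Z$. Hence $N$ is a free product of copies of $\Z$ with a free group, i.e.\ free.

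In particular, your Step 2 was not a dead end. The action does not need to be free: trivial edge stabilizers together with free vertex stabilizers already force $N$ to be free. Your remark that the infinite vertex group $H\times\Z$ ``defeats'' the argument is therefore overstated.

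Your re-presentation $\Gamma=\langle G,t\mid tht^{-1}=h,\ h\in H\rangle$ as an HNN extension of $G$ with trivial twist is valid; it is literally the same presentation as the amalgam. What it buys you is a genuinely free action without inversions. You then need only the most basic Bass--Serre fact, that a group acting freely without inversions on a tree is free. It also exhibits $\Gamma$ directly as the fundamental group of a finite graph of finite groups. The paper's route avoids re-splitting the group, at the cost of invoking the structure theorem for the quotient graph of groups of a subgroup acting on the tree.
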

\begin{proof}
   Define a homomorphism $\pi: G*_H (H\times \Z)\to G$ given by the identity map on $G$ and sending $\Z$  to 1. Note that the kernel of $\pi$ intersected with any conjugate of $G$ is the trivial group. Thus, the action of $\ker(\pi)$ on the Bass--Serre tree gives a splitting as a graph of groups with trivial edge groups and with some trivial vertex groups and some $\Z$ vertex groups \cite{Ser80}. In particular it must actually be a free group. Hence $G$ is virtually free, which implies PFF by Lemma \ref{lem:strong-MF-facts}. 
\end{proof}

Now we can prove our main theorem:
\begin{proof}[Proof of Theorem \ref{thm:1.1}]
    We first prove the case $L = \mathbb Z$. As in Lemma \ref{lem:2.3}, we set $K_i = \bigcap_{g \in G} gH_ig^{-1}$, the normal core of $H_i$. Note that $K_i$ is of finite index in $G$, so we have $G/K_i\ast_{H_i/K_i} (H_i/K_i\times\mathbb{Z})$ is PFF by Lemma \ref{lem:2.4}. These are exact since amalgamated free products of exact groups are exact \cite[Theorem 3.2]{Dykema2004Exactness}. Since $G$ is MF (resp., PMF, PFF) by hypothesis, we have that \[[G/K_i\ast_{H_i/K_i} (H_i/K_i\times\mathbb{Z})]\times G\] is also MF (resp., PMF, PFF) by \cite[Prop 3.2]{HadwinShen2010} or Lemma \ref{lem:strong-MF-facts}. Now the result follows from Lemma \ref{lem:2.3}.

    Now let $L$ be a general residually finite, MF (resp., PMF, PFF) group. Since either $G$ or $L$ is exact and $G$ is MF (resp., PMF, PFF), $G\times L$ is MF (resp., PMF, PFF). Since $L$ is residually finite, $\{e\}$ is an intersection of finite index subgroups of $L$, so $H\times \{e\} \subset G\times L$ is separable. By the previous paragraph, $(G\times L) *_{H\times\{e\}}(H\times\{e\}\times \mathbb Z)$ is MF (resp., PMF, PFF). Now, note that we have the following embeddings:
    \begin{align*}
        G*_H(H\times L) &\hookrightarrow (G\times L) *_{H\times\{e\}} (G\times L) \\
        & \hookrightarrow \ast_{H\times\{e\}} (G\times L)\rtimes \mathbb{Z}\\
        &\cong (G\times L) *_{H\times\{e\}}(H\times\{e\}\times \mathbb Z).
    \end{align*}
    As subgroups of MF (resp., PMF, PFF) groups are MF (resp., PMF, PFF), this concludes the proof.
\end{proof}

\begin{proof}[Proof of Corollary \ref{graph pdoructs cor}]
    We proceed by induction on the number of vertices. The base case of one vertex is clear. Now suppose any graph product over a graph with $n-1$ vertices of exact, MF (resp., PMF, PFF), residually finite groups is MF (resp., PMF, PFF). Let $\Gamma=(V,E)$ be a  graph with $n$ vertices. Pick a vertex $v\in V.$ For a subset $W\subset V$, let $\Gamma_W$ be the subgroup of $\Gamma_{\mathcal G}$ generated by $\{G_w : w\in W\}$. For a vertex $v\in V,$ let $\mathrm{lk}(v)$ be the \emph{link} of $v$; that is, the vertices $w\in V$ such that $(v,w)\in E$. Recall that $\Gamma_{\mathcal G} = \Gamma_{V\setminus\{v\}} *_{\Gamma_{\mathrm{lk}(v)}}(\Gamma_{\mathrm{lk}(v)}\times G_v)$. To apply Theorem \ref{thm:1.1}, all of the conditions are immediately satisfied by the hypotheses except that ${\Gamma_{\mathrm{lk}(v)}}$ is separable. But this follows from the residual finiteness of all of the $G_w$, see Lemma 3.9 of \cite{HsuWise1999}. So the graph product $\Gamma_{\mathcal G}$ is MF (resp., PMF, PFF).
\end{proof}

For the benefit of the reader we also include the following alternative proof of PFF for limit groups.

\begin{rmk}\label{limit groups MF}
    Limit groups are PFF. 
\end{rmk}

\begin{proof}
    We first recall that all limit groups embed into an iterated extension of centralizers of a free group; i.e., if $G$ is a limit group then one can embed $G$ into a group $G_n$ obtained by starting with a free group $G_0$ and forming amalgamated free products of the form $G_{i+1} = G_i *_{A_i} (A_i\times \mathbb Z)$ for some cyclic subgroups $A_i$. See for instance Theorem 4.2 of  \cite{ChampetierGuirardel2005}. All such $G_i$ are LERF groups \cite{Wilton2008Halls}, so in particular each $A_i$ is separable. As free groups are PFF, the result follows by recursively applying Theorem \ref{thm:1.1}.
\end{proof}

\bibliographystyle{amsalpha}
\bibliography{bibliography}

\end{document}